\providecommand{\noopsort}[1]{} %year = "unpublished manuscript\setbox0=\hbox{2003}"
\theoremstyle{plain}
\newtheorem{theorem}{Theorem}[section]%\newtheorem*{nonumbertheorem}{Theorem}
\newtheorem{corollary}[theorem]{Corollary}
\newtheorem{lemma}[theorem]{Lemma}
\newtheorem{main}{Theorem}
\newtheorem{maincor}[main]{Corollary}
\theoremstyle{definition}
\theoremstyle{remark}
\numberwithin{equation}{section}
\newcommand{\ep}{\epsilon}
\newcommand{\Z}{\mathbb{Z}}
\newcommand{\s}{\mathbb{S}}
\DeclareMathOperator{\SU}{SU}
\DeclareMathOperator{\cod}{cod}
\DeclareMathOperator{\diag}{diag}
\newcommand{\of}[1]{\left(#1\right)}
\newcommand{\ceil}[1]{\left\lceil #1 \right\rceil}
\newcommand{\floor}[1]{\left\lfloor #1 \right\rfloor}
\newcommand{\id}{\mathrm{id}}
\title{On a generalized conjecture of Hopf with symmetry}
\author{Manuel Amann}
\author{Lee Kennard}
\date{\today}
\subjclass[2010]{53C20 (Primary), 57N65 (Secondary)}
\keywords{\noindent positive curvature, torus symmetry, Euler characteristic, Hopf conjecture}
\thanks{The first author was supported by a grant of the German Research Foundation. The second author was partially supported by National Science Foundation Grant DMS-1045292.}
\begin{document}

\begin{abstract}
A famous conjecture of Hopf is that the product of the two-dimensional sphere with itself does not admit a Riemannian metric with positive sectional curvature. More generally, one may conjecture that this holds for any nontrivial product. We provide evidence for this generalized conjecture in the presence of symmetry.
\end{abstract}

\maketitle

\bigskip

Among compact, simply connected, even-dimensional smooth manifolds, the only examples known to admit a Riemannian metric with positive sectional curvature form a short list: spheres, complex projective spaces, quaternionic projective spaces, the Cayley plane, the three flag manifolds discovered by Wallach \cite{Wallach72}, and the biquotient $\SU(2)/\!/T^2$ discovered by Eschenburg \cite{Eschenburg84}.

In order to find additional examples, it is natural to look among metrics with symmetry. This strategy has recently resulted in a new example in dimension seven (see Dearricott \cite{Dearricott11} and Grove--Verdiani--Ziller \cite{GroveVerdianiZiller11}). To narrow the search, one seeks topological obstructions to positive curvature and symmetry. This broad research program was formulated by Grove and developed by him and many others over the past two decades (see Grove \cite{Grove09}, Wilking \cite{Wilking07}, and Ziller \cite{Ziller07,Ziller??} for surveys).

In this article, we prove further topological restrictions in the presence of torus symmetry. Our first theorem considers the case where the positively curved Riemannian manifold $M^{2n}$ has vanishing fourth Betti number. To motivate this assumption, we recall that, if the rank of the isometric torus action exceeds $\log_{4/3}(2n)$, then the Betti numbers of $M$ satisfy $b_2(M)\leq b_4(M) \leq 1$ (see \cite{Kennard2}).

\begin{main}\label{thm:Euler}
Let $M^{2n}$ be a simply connected, closed manifold with $b_4(M) = 0$. Assume $M$ admits a Riemannian manifold with positive sectional curvature invariant under the action of a torus $T$ with $\dim(T) \geq \log_{4/3}(2n)$. The following hold:
	\begin{enumerate}
	\item The Euler characteristic satisfies $\chi(M) = \chi(\s^{2n}) = 2$.
	\item The signature satisfies $\sigma(M) = \sigma(\s^{2n}) = 0$.
	\item The fixed-point set $M^T$ is an even-dimensional rational sphere.
	\item For $g \in T$, $M^g$ is nonempty, and the number of components is at most two, with equality only if $g$ is an involution.
	\end{enumerate}
\end{main}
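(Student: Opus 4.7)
The plan is to first establish that $M^T$ is an even-dimensional rational sphere (claim (3)), and then to derive the remaining claims from this structural result. Throughout, the starting observation is that the rank hypothesis combined with the cited theorem from \cite{Kennard2} gives $b_2(M) \leq b_4(M) = 0$, so $b_2(M) = 0$ as well.

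For (3), I would first invoke Berger's theorem to conclude that $M^T$ is nonempty and that its components are even-dimensional, totally geodesic, positively curved submanifolds of $M$. To show each component is a rational sphere, I would use Wilking's connectedness principle: a maximal-dimension component $F$ has small codimension in $M$ (by a standard linearization argument exploiting that $T$ has large rank), and the inclusion $F \hookrightarrow M$ is highly connected, transferring the low-degree Betti number vanishing from $M$ to $F$. Poincar\'e duality on $F$ extends this to the upper half of its Betti numbers, leaving only the middle degrees to control. One iterates the argument on subtori of $T$ arising from the slice representation at $F$ to exhaust the remaining degrees. Finally, that $M^T$ is connected follows from Conner's equality $\chi(M^T) = \chi(M)$ combined with a Frankel-type dimension restriction ruling out multiple components of near-maximal dimension.

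Claims (1) and (2) then follow from (3) via two classical rigidity results: Conner's identity yields $\chi(M) = \chi(M^T) = \chi(\s^{2n}) = 2$, while the Atiyah--Singer rigidity of the signature under $S^1$-actions, iterated over a filtration of $T$ by subcircles, gives $\sigma(M) = \sigma(M^T) = 0$. For claim (4), note that $\overline{\langle g \rangle}$ is a closed subtorus of $T$, so $M^g \supseteq M^T$ is nonempty, and its components are totally geodesic, even-dimensional submanifolds of $M$. By Frankel's theorem, any two distinct components have dimensions summing to at most $2n - 1$. When $g$ is not an involution, the slice representation of $\overline{\langle g \rangle}$ on the normal bundle of a fixed component has no trivial summand, so the codimension is at least $2$; combined with the near-maximal dimension of the principal component (coming from the analysis used in (3), applied to the subtorus $\overline{\langle g \rangle}$), this leaves no room for a second component, whereas for an involution two components of complementary dimensions summing to $2n-2$ are geometrically possible.

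The main technical hurdle is the demonstration that each component of $M^T$ is a rational sphere. Controlling the middle-dimensional Betti numbers of $F$ requires a careful iteration of the Betti-number inequalities from \cite{Kennard2} on fixed-point sets of subtori, together with Wilking-type connectedness estimates to transfer information between manifolds of differing dimensions. Ensuring that the rank bound $\log_{4/3}(\cdot)$ is preserved under each reduction, with enough slack to reapply the cited inequality on successively smaller fixed sets, is the delicate bookkeeping that will likely occupy the bulk of the argument.
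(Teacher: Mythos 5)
Your high-level outline (prove that $M^T$ is a rational sphere, then read off $\chi$ and $\sigma$ via Conner's and Atiyah--Singer's identities) matches the paper's, and the derivations of (1) and (2) from (3) are essentially what the paper does. However, your proposed proof of (3) has a genuine gap, and your argument for (4) does not work as stated.

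For (3), the step that fails is the claim that connectedness (or more precisely, that the whole fixed set is a single rational sphere or $\s^0$) ``follows from Conner's equality $\chi(M^T)=\chi(M)$ combined with a Frankel-type dimension restriction.'' This is circular: $\chi(M)=2$ is conclusion (1), not an input, so $\chi(M^T)=\chi(M)$ tells you nothing yet about the number of components. And Frankel's theorem only excludes a second component whose dimension, added to that of the principal one, reaches $\dim M$; it does nothing to exclude small components, in particular isolated $T$-fixed points sitting far from your large component $F$. Your iterative connectedness/Wilking scheme controls the topology of a single large component of $M^T$, but it gives no handle on how many components there are. The paper's mechanism for exactly this difficulty is Corollary~\ref{cor:CoverTwo}: given any two $T$-fixed points $x,y$, one manufactures an involution $\iota\in T$ and a component $N\subseteq M^\iota$ of controlled codimension, with small kernel, containing \emph{both} $x$ and $y$. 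That ``pairwise covering'' statement is what lets an induction over dimension (in the form of Theorem~\ref{thm:EulerPLUS}, which asserts $M^T=N^T$ for a totally geodesic positive-dimensional rational sphere $N$) force all components of $M^T$ to sit inside a single lower-dimensional submanifold whose fixed set is already known to be a rational sphere. Without some device of this kind, the case of several small components, or of at least three isolated fixed points, is not addressed by your proposal.

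For (4), the argument is off. The observation that the normal slice representation of $\overline{\langle g\rangle}$ has no trivial summand just says the codimension of a component of $M^g$ is at least $2$, which holds for every nontrivial $g$ and does not distinguish involutions. As above, Frankel cannot rule out a small second component. The paper's actual argument is different: by Berger, each component of $M^g$ contains a $T$-fixed point; since $M^T$ is a rational sphere it has at most two components; hence $M^g$ has at most two components. If there are two, each carries a $T$-action with a single fixed point, and by Bredon (Corollary IV.2.3 of \cite{Bredon72}) a closed manifold with a torus action having exactly one fixed point must be non-orientable; a fixed-point component of $g$ with $g^2\neq\id$ is orientable, so equality forces $g$ to be an involution. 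Your sketch misses both the use of Berger on components of $M^g$ and the orientability/Bredon step.
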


As an application of this result, consider an arbitrary simply connected, closed manifold $N^n$, and consider its two-fold product $M^{2n} = N\times N$. Suppose that $M$ has a metric with positive curvature and an isometric torus ation of rank at least $\log_{4/3}(2n)$. As mentioned above, this implies $b_2(M) \leq b_4(M) \leq 1$. By the K\"unneth formulas, $b_4(M) = 0$. By Theorem \ref{thm:Euler},
	\[2 = \chi(M) = \chi(N)^2,\]
which is impossible. Hence $N \times N$ has no such metric. A similar conclusion can be drawn for connected sums. We summarize this corollary as follows:

\begin{maincor}
Let $N^n$ be a simply connected, closed manifold. The product $N\times N$ does not admit a Riemannian metric with positive sectional curvature and an isometric torus action of rank at least $\log_{4/3}(2n)$.

Similarly, if $n$ is even and $\chi(N) \neq 2$, the connected sum $N \# N$ does not admit a positively curved metric invariant under a torus action of rank at least $\log_{4/3}(n)$.
\end{maincor}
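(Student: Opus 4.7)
The plan is to reduce each half of the corollary to Theorem~\ref{thm:Euler} by establishing the hypothesis $b_4(M)=0$, and then to extract a contradiction from the Euler-characteristic formula appropriate to the respective construction.

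The product statement is essentially spelled out in the paragraph preceding the corollary, so I would only fill in the K\"unneth step carefully: since $N$ is simply connected,
\[
b_4(N\times N) = 2b_4(N)+b_2(N)^2, \qquad b_2(N\times N) = 2b_2(N),
\]
so the bound $b_2(M)\leq b_4(M)\leq 1$ coming from \cite{Kennard2} forces both $b_4(N)=0$ and $b_2(N)=0$, hence $b_4(M)=0$. Theorem~\ref{thm:Euler} then delivers $\chi(M)=2$, and $\chi(N\times N)=\chi(N)^2$ being a perfect square supplies the contradiction.

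For the connected sum $M = N\# N$ with $n$ even and $\chi(N)\neq 2$, the strategy is identical but with $\dim M = n$ in place of $2n$; the rank hypothesis $\dim T\geq \log_{4/3}(n)$ is exactly the input \cite{Kennard2} needs. Provided $n>4$, the connected-sum identity $b_k(N\# N)=2b_k(N)$ in the range $0<k<n$ converts $b_4(M)\leq 1$ into $b_4(N)=0$, so $b_4(M)=0$ and Theorem~\ref{thm:Euler} again yields $\chi(M)=2$. Combining with the standard formula
\[
\chi(N\# N) = 2\chi(N) - \chi(\s^n) = 2\chi(N) - 2
\]
(using $n$ even, so $\chi(\s^n)=2$) forces $\chi(N)=2$, contradicting the hypothesis. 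The remaining low-dimensional cases $n\in\{2,4\}$ are vacuous: a simply connected closed surface is $\s^2$, and a smooth effective torus action on a 4-manifold has rank at most four, whereas $\log_{4/3}(4)>4$.

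The argument is mostly bookkeeping once Theorem~\ref{thm:Euler} is available; the only point I would want to double-check is that the cohomological input from \cite{Kennard2} is phrased in terms of $\dim M$, so that it applies uniformly both to the product of dimension $2n$ and to the connected sum of dimension $n$.
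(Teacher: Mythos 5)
Your proof is correct and follows the same route the paper sketches in the paragraph preceding the corollary: invoke the $b_2(M)\leq b_4(M)\leq 1$ bound from \cite{Kennard2}, use K\"unneth (resp.\ the connected-sum Betti-number and Euler-characteristic identities) to get $b_4(M)=0$, apply Theorem~\ref{thm:Euler} to obtain $\chi(M)=2$, and derive a contradiction. Your worry at the end is unfounded: the cited bound from \cite{Kennard2} is stated for $M^{2n}$ with rank exceeding $\log_{4/3}(2n)$, i.e.\ it is a condition on $\dim M$, so it applies verbatim to the $n$-dimensional connected sum under the rank hypothesis $\log_{4/3}(n)$; your side remarks on the vacuous low-dimensional cases are a sensible bit of bookkeeping the paper omits.
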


Hopf conjectured that $\s^2 \times \s^2$ does not admit a Riemannian metric with positive sectional curvature. This corollary can be seen as positive evidence for the generalized conjecture that no product $N\times N$ admits such a metric. One can similarly prove that $M$ cannot be the total space of a fibration with base $N$ and fiber $N$, at least when $b_3(N) = b_5(N) = 0$.

Another way to generalize Hopf's conjecture is to consider $\s^2 \times \s^2$ as a rank-two symmetric space. The simply connected, compact, rank-one symmetric spaces are the spheres and projective spaces, and these admit positive sectional curvature. One might conjecture that, among compact, simply connected symmetric spaces, only those with rank one admit positive sectional curvature. The next corollary can be seen as evidence for this.

\begin{maincor}
Suppose $M^{2n}$ has the rational cohomology of a compact, simply connected symmetric space $N$. Suppose that $M$ admits a Riemannian metric with positive sectional curvature which is invariant under an effective $r$--torus action. If $b_4(M) = 0$ and $r \geq \log_{4/3}(2n)$, then $N$ is a sphere.
\end{maincor}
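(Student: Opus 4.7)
The plan is to apply Theorem \ref{thm:Euler} to $M$ to constrain the rational cohomology of $N$, and then use the structure theory of compact, simply connected symmetric spaces to conclude that $N$ is a sphere.

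First, Theorem \ref{thm:Euler} yields $\chi(M) = 2$, so the rational-cohomology equivalence of $M$ and $N$ gives $\chi(N) = 2$ and $\dim N = 2n$. I would then reduce to the case that $N$ is irreducible. Writing $N = N_1 \times \cdots \times N_k$ as a de Rham product of compact, simply connected, irreducible symmetric spaces, the Hopf--Samelson theorem guarantees that each nontrivial factor $N_i = G_i/H_i$ has Euler characteristic either $0$ (when $\rank(G_i) \ne \rank(H_i)$, which includes the compact simple Lie group factors) or $|W_{G_i}|/|W_{H_i}| \geq 2$ (when $\rank(G_i) = \rank(H_i)$, with $W_{H_i} \subsetneq W_{G_i}$). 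Since $\chi(N) = \prod_i \chi(N_i) = 2$, this forces $k = 1$, i.e., $N$ is irreducible.

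Now, since $\chi(N) > 0$, $N$ is of equal rank; a classical result (essentially due to Hopf--Samelson and Bott, and first proved in full generality by Borel) ensures that all odd rational Betti numbers of $N$ vanish. Combined with Poincar\'e duality and simple-connectedness (so $b_0(N) = b_{2n}(N) = 1$), the identity $\chi(N) = \sum_{i=0}^{n} b_{2i}(N) = 2$ forces all intermediate Betti numbers of $N$ to vanish. Hence $N$ is a rational sphere.

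Finally, I would invoke Cartan's classification to conclude that among compact, simply connected, irreducible symmetric spaces, the only rational spheres are the spheres themselves; combined with $\dim N = 2n$, this gives $N = \s^{2n}$. The appeal to the classification is the principal concrete step and is the main obstacle: it amounts to verifying that the rank-one non-sphere options ($\C\pp^m$, $\HH\pp^m$, $\Ca\pp^2$) all have nontrivial intermediate cohomology (trivial from their known cohomology rings), and that higher-rank irreducible symmetric spaces do as well (e.g., via Borel's description of their rational cohomology rings, or via Schubert calculus on the various Grassmannians). All preceding steps rest on general structural facts about equal-rank symmetric spaces and Poincar\'e duality.
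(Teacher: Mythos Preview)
Your proof is correct, but it takes a different route from the paper's. The paper's argument is a one-line citation: it combines Theorem~\ref{thm:Euler} with \cite[Theorem~A]{Kennard2}, the latter already restricting the possible symmetric-space rational types of a positively curved manifold with this much torus symmetry; the extra input $\chi(M)=2$ (or $b_4=0$) then pins $N$ down as a sphere.

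Your argument instead extracts only $\chi(N)=2$ from Theorem~\ref{thm:Euler} and finishes by pure symmetric-space structure theory: Hopf--Samelson forces irreducibility, equal rank kills the odd Betti numbers, Poincar\'e duality makes $N$ a rational sphere, and the Cartan classification identifies $N$ as $\s^{2n}$. This is more self-contained---you never invoke the deeper periodicity results from \cite{Kennard2}---and in fact isolates the stronger statement that any compact, simply connected symmetric space with $\chi=2$ is a sphere. The cost is the case-check through the classification, which you correctly flag as the main concrete step; it is routine but not entirely trivial (one must note, for instance, that the equal-rank irreducible pairs with $|W_G|/|W_K|=2$ are exactly the even spheres $\SO(2m+1)/\SO(2m)$, together with low cases such as $\C\pp^1$, $\HH\pp^1$ that are already spheres). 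The paper's approach trades this check for a citation.
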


The proof of this corollary is an immediate consequence of Theorem \ref{thm:Euler} and \cite[Theorem A]{Kennard2}.

\smallskip

The conclusion of Theorem \ref{thm:Euler} can be improved by imposing additional topological conditions on $M$. For example, suppose that $M$ is rationally elliptic, as conjectured by Bott (see Grove--Halperin \cite{GroveHalperin82}). Since $\chi(M) = 2$, the odd Betti numbers vanish (see \cite{GroveHalperin82}), hence $M$ is a rational sphere. We summarize similar corollaries here. For the proof, see Section \ref{sec:HomeoDiffeo}.

\begin{maincor}\label{cor:HomeoDiffeo}
Let $M^{2n}$ be a simply connected, closed Riemannian manifold with $b_4(M) = 0$. Assume $M$ admits a metric with positive sectional curvature and an isometric torus action of rank at least $\log_{4/3}(2n)$. The following hold:
	\begin{enumerate}
	\item If $M$ has vanishing odd-dimensional rational cohomology, e.g., if $M$ is rationally elliptic, then $M$ is a rational $\s^{2n}$.
	\item If $M$ is $p$--elliptic for some prime $p \geq 2n$, then $M$ is a mod $p$ homology $\s^{2n}$.
	\item If $M$ has vanishing homology in odd degrees, then $M$ is homeomorphic to $\s^{2n}$.
	\item If $M$ is a biquotient, then $M$ is diffeomorphic to $\s^{2n}$.
	\item If $M$ admits a smooth, effective cohomogeneity one action by a compact, connected Lie group, and if the homology of $M$ has no two--torsion, then $M$ is equivariantly diffeomorphic to $\s^{2n}$ equipped with a linear $G$--action.
	\end{enumerate}
\end{maincor}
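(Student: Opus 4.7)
The plan is to combine the conclusion $\chi(M) = 2$ from Theorem~\ref{thm:Euler}---together with its structural information on fixed-point sets---with coefficient-specific Poincar\'e duality and, for parts (4) and (5), with established classification theorems.

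For part (1), the hypothesis that rational cohomology vanishes in odd degrees reduces $\chi(M) = 2$ to $\sum_i b_{2i}(M) = 2$; since $b_0(M) = b_{2n}(M) = 1$ by simple connectivity and Poincar\'e duality, all remaining rational Betti numbers must vanish, so $M$ is a rational $\s^{2n}$. The rationally elliptic case reduces to this by the Friedlander--Halperin theorem that a simply connected rationally elliptic space with positive Euler characteristic has vanishing odd rational cohomology. For part (2), the appropriate mod-$p$ extension for $p$-elliptic spaces valid when $p \geq 2n$ (in the spirit of McCleary--Ziller) forces vanishing of odd mod-$p$ Betti numbers, and the same Euler-characteristic/duality argument identifies $M$ as a mod-$p$ sphere. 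For part (3), having integer homology concentrated in even degrees allows the same computation at the integer level: Poincar\'e duality together with $\chi(M) = 2$ forces $H_*(M;\Z) \cong H_*(\s^{2n};\Z)$, so the simply connected $M$ is an integer homology sphere, hence a homotopy sphere, hence homeomorphic to $\s^{2n}$ by the resolved topological Poincar\'e conjecture in all dimensions.

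For parts (4) and (5), the key input is that biquotients (Eschenburg) and cohomogeneity one manifolds (Grove--Halperin) are rationally elliptic, so part (1) already places $M$ in the rational homotopy type of $\s^{2n}$. For (4), I would then invoke a structure result for biquotients whose rational cohomology is that of a sphere to conclude that $M$ is diffeomorphic to $\s^{2n}$. For (5), the no-two-torsion hypothesis, combined with the rational conclusion and integer Poincar\'e duality, should upgrade $M$ to an integer homology sphere; Straume's classification (after Hsiang--Su and Asoh) of cohomogeneity one actions on homology spheres then gives the equivariant identification with a linearly acted-upon $\s^{2n}$.

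The most delicate step is likely (5), where one must verify that the combination of our hypotheses genuinely fits the framework of Straume's classification---in particular, that $M$ is an integer homology sphere, not merely a rational one. A secondary concern arises in (4), where one needs to ensure the biquotient classification covers the range of dimensions $2n$ consistent with the rank hypothesis $\dim T \geq \log_{4/3}(2n)$; if not, small dimensions may have to be handled separately, likely by appealing to Eschenburg's list of positively curved biquotients.
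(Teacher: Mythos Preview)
Your approach to parts (1), (3), and (4) matches the paper's. For (2), the paper takes a slightly different route: rather than arguing directly that odd mod $p$ Betti numbers vanish, it observes that $p$--ellipticity implies rational ellipticity, so $M$ is a rational sphere by (1), and then invokes Powell's classification \cite{Powell97} of $p$--elliptic rational spheres for $p \geq \dim M$. Your proposed direct argument would need a mod $p$ analogue of the Friedlander--Halperin odd-vanishing theorem; the attribution to ``McCleary--Ziller'' is not standard here, and the paper's route via Powell avoids the issue.

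There is a genuine gap in (5). From ``rational sphere'' together with ``no two-torsion'' you cannot conclude that $M$ is an \emph{integer} homology sphere: the hypothesis leaves open odd-order torsion in $H_*(M;\Z)$, and Poincar\'e duality does not eliminate it. What you \emph{can} conclude is that $M$ is a mod $2$ homology sphere: by universal coefficients, $H_i(M;\Z_2) \cong \big(H_i(M;\Z)\otimes\Z_2\big) \oplus \mathrm{Tor}\big(H_{i-1}(M;\Z),\Z_2\big)$, and both summands vanish for $0 < i < 2n$ since the intermediate integral homology is finite of odd order. The paper then invokes Asoh's classification \cite{Asoh81} of cohomogeneity one actions on $\Z_2$--cohomology spheres---not Straume's classification for integer homology spheres---to obtain the equivariant diffeomorphism. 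Your instinct that (5) is the most delicate step is correct, but the fix is to weaken the target to a mod $2$ sphere and cite the classification result that matches that hypothesis.
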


We remark that the torus action in this corollary need not respect the biquotient or cohomogeneity one structure. 

We also remark that, whenever $M$ is spin and homeomorphic to $\s^{4k}$, its elliptic genus vanishes. Corollary \ref{cor:HomeoDiffeo} can therefore be seen as providing further evidence for a conjecture Dessai (see Dessai \cite{Dessai05,Dessai07}, Weisskopf \cite{Weisskopf}, and \cite{AmannKennard1}).

\smallskip

We conclude by remarking on the assumption that $b_4(M) = 0$. In the presence of positive curvature and torus symmetry as in Theorem \ref{thm:Euler}, the only other possibility is $b_4(M) = 1$. Since our results when $b_4(M) = 0$ suggest that $M$ might be a rational sphere, one might similarly hope to show that $b_4(M) = 1$ implies that $M$ has the rational type of a projective space. In particular, one might hope to calculate the Euler characteristic and signature of such a manifold.

\subsection*{Acknowledgements}
We would like to thank Burkhard Wilking for a motivating discussion on calculating Euler characteristics of positively curved manifolds with symmetry. We are also grateful to Wolfgang Ziller for alerting us to Asoh's paper.

\bigskip\section{Preliminaries}\bigskip

In this section, we prove the key lemma (Corollary \ref{cor:CoverTwo}) for the proof of Theorem \ref{thm:Euler}. There are two technical lemmas required. The first is essentially Lemma 5.6 in \cite{Kennard3}:

\begin{lemma}
Let $M^n$ be a closed, simply connected, positively curved Riemannian manifold, let $T$ be a torus acting effectively on $M$, and let $x$ be a fixed point. Fix $c \geq 1$ and $k_0 \leq \frac{n-c}{4}$. Set $j = \floor{\log_2(k_0)}+1$ or $j = \floor{\log_2(k_0)}$ according to whether $n$ is even or odd. If there exist independent involutions $\iota_1,\ldots,\iota_j \in T$ such that $M^{\iota_i}_x$ has codimension at most $\frac{n-c}{2}$ for all $i$, then one of the following holds:
	\begin{itemize}
	\item $M$ has $4$--periodic rational cohomology, or
	\item there exists an involution $\iota \in T^s$ such that $\of{M^{\iota_1}_x}^T \cap \cdots \cap \of{M^{\iota_j}_x}^T  \subseteq M^\iota_x$, $k_0 < \cod\of{M^\iota_x} \leq \frac{n-c}{2}$, and $\dim\ker\of{T|_{M^\iota_x}} \leq 1$.
	\end{itemize}
\end{lemma}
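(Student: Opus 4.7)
\emph{Plan.} I will follow the strategy of \cite[Lemma 5.6]{Kennard3}, adapting only the calibration of $j$ to the parity of $n$. The independent involutions $\iota_1,\ldots,\iota_j$ generate a subgroup $\Z_2^j\subseteq T$. Any nontrivial $\iota\in\Z_2^j$ is a product of some of the $\iota_i$, so
\[ M^\iota_x \supseteq M^{\iota_1}_x\cap\cdots\cap M^{\iota_j}_x, \]
and in particular $\of{M^{\iota_1}_x}^T\cap\cdots\cap\of{M^{\iota_j}_x}^T\subseteq M^\iota_x$ automatically, since the right-hand side is fixed by every $\iota_i$. The work is therefore to either locate an $\iota\in\Z_2^j$ with the required codimension and kernel bounds, or to force $4$-periodicity of the rational cohomology of $M$.

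\emph{Searching for the involution.} Compute $\cod(M^\iota_x)$ for each of the $2^j-1$ nontrivial $\iota\in\Z_2^j$. If any such codimension lies in the window $(k_0,(n-c)/2]$, choose $\iota$ minimizing $\cod(M^\iota_x)$ within this window and pass to the kernel step. Otherwise every such codimension is either at most $k_0$ or strictly greater than $(n-c)/2$. In the remaining case the ``small'' submanifolds $M^\iota_x$ have codimension at most $k_0\leq (n-c)/4$, so twice their codimension is at most $(n-c)/2$, and Wilking's connectedness lemma produces a periodicity in $H^*(M;\Q)$ along each such inclusion. With $2^j-1$ candidate small involutions, and only roughly $\floor{\log_2 k_0}$ levels of codimension available, a pigeonhole-style chain argument — essentially verbatim from \cite[Lemma 5.6]{Kennard3} — combines these periodicities into a single $4$-periodicity. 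The parity-dependent choice of $j$ (one larger in the even case, where Wilking's inequality may be pushed one step further) is precisely what makes the bookkeeping work.

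\emph{Kernel condition and main obstacle.} For the chosen $\iota$ in the window, if the identity component of $\ker\of{T|_{M^\iota_x}}$ had dimension at least two, then within the coset of $\iota$ in $\Z_2^j$ one could multiply by an appropriate $\iota_i$ to produce $\iota'$ with $M^{\iota'}_x\subsetneq M^\iota_x$ and codimension still in $(k_0,(n-c)/2]$, contradicting the minimality of $\cod(M^\iota_x)$. I expect the main obstacle to be the chain argument in Step~2: extracting precisely $4$-periodicity (rather than some coarser divisibility) requires carefully tracking which of the $M^\iota_x$ are nested in which, together with codimension counts that are compatible with the parity-adjusted value of $j$. This is the delicate technical core of the argument and is handled in \cite[Lemma 5.6]{Kennard3}; the adaptation here is chiefly cosmetic.
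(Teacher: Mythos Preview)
Your assertion that the inclusion $\bigcap_i \of{M^{\iota_i}_x}^T \subseteq M^\iota_x$ holds ``automatically'' is the one genuine gap, and it is exactly the point the paper isolates as the only content beyond \cite[Lemma~5.6]{Kennard3}. Your reasoning shows that any $p$ in the left-hand side is fixed by every $\iota_i$, hence by $\iota$, hence lies in $M^\iota$. It does \emph{not} show that $p$ lies in the \emph{component} $M^\iota_x$ through $x$. Equivalently, the set-level inclusion $\bigcap_i M^{\iota_i}_x \subseteq M^\iota$ is obvious, but upgrading to $\bigcap_i M^{\iota_i}_x \subseteq M^\iota_x$ would require that intersection to be connected, and once $j\geq 3$ the sum of the codimensions can exceed $n$, so neither Frankel nor Wilking gives this directly.

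The paper's fix is to track the inclusion inductively rather than assert it globally: whenever $M^\sigma_x$ and $M^\tau_x$ both contain $\bigcap_i\of{M^{\iota_i}_x}^T$ and both have codimension less than $n/2$, Wilking's connectedness lemma forces $M^\sigma_x\cap M^\tau_x$ to be connected, so it equals $M^{\langle\sigma,\tau\rangle}_x$ and the inclusion passes to $M^{\sigma\tau}_x$. Carrying this along the argument of \cite[Lemma~5.6]{Kennard3} ensures that every $\iota$ produced in the process inherits the inclusion. Apart from this point your outline (defer the chain/pigeonhole and kernel bookkeeping to \cite{Kennard3}) matches the paper's, though your minimality argument for the kernel bound is garbled as written: a strictly smaller $M^{\iota'}_x$ has \emph{larger} codimension, so it cannot contradict minimality of $\cod(M^\iota_x)$ within the window; you should simply cite \cite{Kennard3} for that step as the paper does.
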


Here, $\dim\ker\of{T|_{M^\iota_x}}$ denotes the dimension of the kernel of the induced $T$--action on $M^\iota_x$. For the definition of periodic cohomology in this context, see \cite[Definition 1.8]{AmannKennard1}.

The only difference between this lemma and Lemma 5.6 in \cite{Kennard3} is that we include here the condition that $M^\iota_x$ contains every $\of{M^{\iota_i}_x}^T$. This requires one additional argument in the proof. Namely, if $M^\sigma_x$ and $M^\tau_x$ each contain $\of{M^{\iota_i}_x}^T$ and have codimension less than $\frac{n}{2}$, then $M^\sigma_x \cap M^\tau_x$ is connected by Wilking's connectedness lemma (see \cite[Theorem 2.1]{Wilking03}). In particular, for all $i$,
	\[\of{M^{\iota_i}_x}^T \subseteq M^\sigma_x \cap M^\tau_x = M^{\langle\sigma,\tau\rangle}_x \subseteq M^{\sigma\tau}_x.\]
The original proof now shows that, if neither of the two conclusions occur, then every $\iota$ in the subgroup of $T$ generated by the $\iota_i$ satsifies $\cod\of{M^\iota_x} \leq k_0$, $\dim\ker\of{T|_{M^\iota_x}} \leq 1$, and $\of{M^{\iota_1}_x}^T \cap \cdots \cap \of{M^{\iota_j}_x}^T \subseteq M^\iota_x$. As in the original proof, this situtation actually implies that $M$ is rationally $4$--periodic, which completes the proof.

To apply this lemma, one must prove the existence of the involutions $\iota_1,\ldots,\iota_j$. To do this, we simultaneously generalize Proposition 1.14 in \cite{AmannKennard1} and Lemma 5.7 in \cite{Kennard3}. The result is the following.

\begin{lemma}
Let $n \geq c \geq 0$, $j \geq 1$, and $t \geq 1$. Let $M^n$ be a closed, positively curved Riemannian manifold, assume $T^s$ acts effectively by isometries on $M$, and let $x_1,\ldots,x_t \in M$ be fixed points. If
	\begin{equation}\label{eqn:Griesmer}
	t\floor{\frac{n}{2}} < j - 1 + \sum_{i=0}^{s-j} \ceil{2^{-i}\ceil{\frac{t(n-c)+1}{4}}},
	\end{equation}
then there exist independent involutions $\iota_1,\ldots,\iota_j\in T^s$ such that, for all $1\leq i \leq j$, the maximal component of $M^{\iota_i}$ has codimension at most $\frac{n-c}{2}$ and contains at least $\ceil{\frac{t+1}{2}}$ of the points $x_1,\ldots,x_t$.
\end{lemma}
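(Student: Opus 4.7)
The plan is to generalize the single-fixed-point arguments underlying Proposition~1.14 of \cite{AmannKennard1} and Lemma~5.7 of \cite{Kennard3} by combining the isotropy weight data at the $t$ fixed points with the Griesmer bound from binary coding theory. I work throughout in the $2$--torsion subgroup $V = T^s_2 \cong \mathbb{F}_2^s$. At each $x_i$, the isotropy representation decomposes $T_{x_i} M$ into at most $\floor{n/2}$ nontrivial two-dimensional real weight spaces; restricting the associated characters to $V$ yields linear functionals $w_{i,1}, \ldots, w_{i,m_i} \in V^*$ with $m_i \leq \floor{n/2}$, and
	\[\cod\of{M^\iota_{x_i}} = 2 \cdot \#\{k : w_{i,k}(\iota) = 1\}\]
for every $\iota \in V$. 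Effectiveness of the $T^s$--action forces each isotropy representation to be faithful, so every evaluation $V \to \mathbb{F}_2^{m_i}$ is injective.

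Next I argue by contrapositive: assume no $j$ independent involutions in $V$ satisfy the stated conclusion, and call an involution \emph{acceptable} if the maximum-dimensional component of its fixed-point set has codimension at most $\frac{n-c}{2}$ and contains at least $\ceil{\frac{t+1}{2}}$ of the points $x_i$. Then the acceptable involutions span an $\mathbb{F}_2$--subspace $U \leq V$ with $\dim U \leq j-1$; choose a complement $V'$ of dimension $s-j+1$, so that every nonzero element of $V'$ is unacceptable. Frankel's intersection theorem implies that two components of $M^\iota$ of codimension at most $\frac{n-c}{2}$ must meet (their dimensions sum to $\geq n+c \geq n$) and hence coincide, so each nonzero $\iota \in V'$ either (i) has \emph{every} component of $M^\iota$ through any $x_i$ of codimension $> \frac{n-c}{2}$, or (ii) admits a unique small-codimension component $C_\iota$ containing fewer than $\ceil{\frac{t+1}{2}}$ of the $x_i$.

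Consider the binary linear code $\Phi(V') \subseteq \mathbb{F}_2^N$, where $N = \sum_i m_i \leq t\floor{n/2}$ and $\Phi(\iota) = (w_{i,k}(\iota))_{i,k}$. This code has dimension $s-j+1$, and its Hamming weights satisfy $\mathrm{wt}(\Phi(\iota)) = \sum_i \cod\of{M^\iota_{x_i}}/2$. The crux is to show that every nonzero codeword has weight at least $\ceil{\frac{t(n-c)+1}{4}}$. In case (i), all $t$ fixed points individually contribute at least $\ceil{\frac{(n-c)+1}{4}}$, which yields the bound immediately. Case (ii) is the main obstacle: the direct contribution from the $\geq \ceil{\frac{t+1}{2}}$ fixed points lying outside $C_\iota$ only recovers roughly half the needed weight. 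I expect to close this gap following \cite[Proposition~1.14]{AmannKennard1}, observing that unacceptability is inherited by the cosets of $\iota$ inside $V'$ and applying Wilking's connectedness lemma to the pairs $M^\iota_{x_i}, M^\iota_{x_{i'}}$ to force the missing weight.

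Once the minimum distance bound is established, the Griesmer bound applied to $\Phi(V')$ yields
	\[N \geq \sum_{i=0}^{s-j} \ceil{2^{-i} \ceil{\tfrac{t(n-c)+1}{4}}},\]
and the $j-1$ dimensions absorbed by $U$ account for the remaining term in \eqref{eqn:Griesmer}. Combined with $N \leq t\floor{n/2}$, this directly contradicts the hypothesis and establishes the lemma.
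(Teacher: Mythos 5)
Your argument correctly identifies the relevant linear code, and the minimum-distance bound you want is in fact achievable: if $\iota \neq 0$ is unacceptable, the Frankel argument (exactly as in \cite[Prop.\ 1.14]{AmannKennard1}) shows $\sum_i \cod(M^\iota_{x_i}) > \tfrac{t(n-c)}{2}$, hence $\mathrm{wt}(\Phi(\iota)) \geq \lceil \tfrac{t(n-c)+1}{4}\rceil$ directly, without any case split; your worry about ``case (ii)'' is a red herring, since the unique small-codimension component argument is precisely what Frankel gives you in one stroke. The genuine gap is in the final step. Applying Griesmer to $\Phi(V')$, a code of dimension $s-j+1$ and length $N \leq t\lfloor n/2\rfloor$, yields only $t\lfloor n/2\rfloor \geq N \geq \sum_{i=0}^{s-j}\lceil 2^{-i}\lceil \tfrac{t(n-c)+1}{4}\rceil\rceil$. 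Compared against the hypothesis $t\lfloor n/2\rfloor < j-1 + \sum_{i=0}^{s-j}\lceil 2^{-i}\lceil\tfrac{t(n-c)+1}{4}\rceil\rceil$, this gives $0 < j-1$, which is no contradiction once $j \geq 2$. The phrase ``the $j-1$ dimensions absorbed by $U$ account for the remaining term'' does no work: restricting to a complement of $U$ shortens the \emph{dimension} of the code, not its \emph{length}, and Griesmer is insensitive to the ambient $V$.

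The paper's proof avoids this by constructing the $\iota_i$ inductively with a Gaussian-elimination-style pivot: at stage $h$ one records coordinates $u_1,\ldots,u_h$ with $\phi_{u_i}(\iota_i) \neq 0$ and $\phi_{u_{i'}}(\iota_i) = 0$ for $i' < i$, restricts to $\Z_2^{s-h} \subseteq \bigcap_i \ker \phi_{u_i}$, and projects $\phi$ away from those $h$ coordinates. Because the restricted involutions vanish on the pivot coordinates, this projection preserves Hamming weight, so if no $\iota_{h+1}$ exists Griesmer applies to a code of \emph{length $t\lfloor n/2\rfloor - h$}, giving $t\lfloor n/2\rfloor - h \geq \sum_{i=0}^{s-h-1}\lceil 2^{-i}\lceil\cdot\rceil\rceil$, and then a term-dropping manipulation pushes $h$ up to $j-1$. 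It is exactly this length reduction by $h$ that produces the $j-1$ on the right-hand side of \eqref{eqn:Griesmer}; your one-shot complement argument has no mechanism to shorten the code, so the bound you derive is strictly weaker than the hypothesis and the contradiction does not close.
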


\begin{proof}
Set $m = \floor{\frac{n}{2}}$. For each $x_i$, choose a basis of $T_{x_i} M$ such that the image of every $\iota \in \Z_2^s \subseteq T^s$ under the isotropy representation takes the form $\diag(\ep_1 I, \ldots, \ep_m I)$ or $\diag(\ep_1 I, \ldots, \ep_m I, 1)$ according to whether $n$ is even or odd. Here, the $\ep_i = \pm 1$, and $I$ denotes the two-by-two identity matrix. Observe that $\cod\of{M^\iota_{x_i}}$ equals twice the Hamming weight of $(\ep_1,\ldots,\ep_m) \in \Z_2^m$.

The direct sum of these $t$ maps induces a homomorphism $\phi:\Z_2^s \to \bigoplus_{i=1}^t \Z_2^m \cong \Z_2^{tm}$. Let $\phi_u$ denote the composition of $\phi$ with the projection onto the $u$--th component. For example, the codimension of $M^\iota_{x_1}$ is equal to twice the Hamming weight of the vector $(\phi_1(\iota), \ldots, \phi_m(\iota)) \in \Z_2^m$.

Consider now an integer $0 \leq h \leq j - 1$ such that there exist independent $\iota_1,\ldots,\iota_h\in \Z_2^s$ and integers $u_1,\ldots,u_h$ such that, for all $1 \leq i \leq h$,
	\begin{enumerate}
	\item there is a component $N_i$ of $M^{\iota_i}$ with codimension at most $\frac{n-c}{2}$ that contains at least $\ceil{\frac{t+1}{2}}$ of the points $x_1,\ldots,x_t$,
	\item $\phi_{u_i}(\iota_i) \in \Z_2$ is nontrivial, and 
	\item $\phi_{u_{i'}}(\iota_i) \in \Z_2$ is trivial for all $1 \leq i' < i$.
	\end{enumerate}
Note that these conditions are vacuously satisfied for $h = 0$. We claim that, given $\iota_1,\ldots,\iota_h$ as above, there exists $\iota_{h+1}$ such that all of these properties hold. By induction, this suffices to prove the existence of $\iota_1,\ldots,\iota_j$ as in the conclusion of the lemma.

To start, choose a $\Z_2^{s-h} \subseteq \ker(\phi_{u_1}) \cap \cdots \cap \ker(\phi_{u_h}) \subseteq \Z_2^s$. Note that every $\iota \in \Z_2^{s-h}$ automatically satisfies the last condition above. Moreover, every nontrivial $\iota \in \Z_2^{s-h}$ is independent of $\iota_1,\ldots,\iota_h$ and is nontrivial since the $T^s$ action is effective. It therefore suffices to prove that some $\iota \in \Z_2^{s-h}$ has a fixed-point component with codimension at most $\frac{n-c}{2}$ that contains $\ceil{\frac{t+1}{2}}$ of the $x_1,\ldots,x_t$.

Consider the composition 
	\[\Z_2^{s-h} \subseteq \Z_2^s \stackrel{\phi}{\longrightarrow} \Z_2^{tm} \longrightarrow\Z_2^{tm - h},\]
where the last map projects away the $u_i$--th components for $1 \leq i \leq h$. By the choice of the $u_i$, the Hamming weight of the image of $\iota \in \Z_2^{s-h}$ under this composition is half of the sum of the codimensions $k_i = \cod\of{M^\iota_{x_i}}$. As in the proof of Proposition 1.14 in \cite{AmannKennard1}, an argument based on Frankel's theorem implies the following: If $\iota \in \Z_2^{s-h}$ exists such that $\sum_{i=1}^t k_i \leq \frac{t(n-c)}{2}$, then there exists a component of $M^\iota$ with codimension at most $\frac{n-c}{2}$ that contains $\ceil{\frac{t+1}{2}}$ of the $x_1,\ldots,x_t$. It therefore suffices to prove that some nontrivial $\iota \in \Z_2^{s-h}$ exists whose image under the above map $\Z_2^{s-h} \to \Z_2^{tm-h}$ has weight at most $\frac{t(n-c)}{4}$.

If no such involution exists, the Griesmer bound (see, for example, the proof of Proposition 1.14 in \cite{AmannKennard1}) implies that
	\[tm - h \geq \sum_{i=0}^{s - h - 1} \ceil{2^{-i} \ceil{\frac{tn-tc+1}{4}}}.\]
Since every summand on the right-hand side is at least one, this inequality is preserved if we replace $h$ by $h+1$. Inductively, this inequality is preserved if we replace $h$ by $j-1$. On the other hand, this contradicts Inequality \ref{eqn:Griesmer}, so the proof is complete.
\end{proof}

The last preliminary result is a consequence of these two lemmas. It is the only result of this section we will use in the proof of Theorem \ref{thm:Euler}.

\begin{corollary}\label{cor:CoverTwo}
Let $M^n$ be a closed, positively curved Riemannian manifold with $n \geq 21$. Assume $T^s$ acts effectively by isometries on $M$ with $s \geq \log_{4/3}(n-3)$. If $x$ and $y$ are fixed by $T^s$, and if $M$ is not rationally $4$--periodic, then there exists an involution $\iota \in T^s$ and a component $N \subseteq M^\iota$ such that $\frac{n-4}{4} < \cod(N) \leq \frac{n-4}{2}$, $\dim\ker\of{T|_N} \leq 1$, and $x,y\in N$. 
\end{corollary}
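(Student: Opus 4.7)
The plan is to combine the two preliminary lemmas, applied with $t=2$ (taking the two $T^s$-fixed points to be $x$ and $y$) and $c=4$. I would set $k_0 = \floor{(n-4)/4}$, which is at least $4$ since $n\geq 21$, and let $j$ be chosen as in the first lemma: $j = \floor{\log_2 k_0}+1$ if $n$ is even and $j = \floor{\log_2 k_0}$ if $n$ is odd.

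The bulk of the work is applying the second preliminary lemma with these parameters. Since $\ceil{(t+1)/2}=2$, it would produce $j$ independent involutions $\iota_1,\ldots,\iota_j\in T^s$ such that, for every $i$, the maximal component of $M^{\iota_i}$ has codimension at most $(n-4)/2$ and contains both $x$ and $y$; in particular $M^{\iota_i}_x=M^{\iota_i}_y$ has codimension at most $(n-4)/2$. The main technical obstacle I anticipate is verifying the Griesmer-type hypothesis (\ref{eqn:Griesmer}) needed to invoke that lemma. A term-by-term estimate should close this: the hypothesis $s\geq\log_{4/3}(n-3)$ forces $2^{s-j}$ to be superlinear in $n$, so the geometric tail $\sum_{i=0}^{s-j}2^{-i}$ is within $o(1)$ of $2$, while the many small-argument inner ceilings equal to $1$ contribute an additional $\Omega(\log n)$. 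This puts the right-hand side at least $j-1+2\ceil{(2n-7)/4}+\Omega(\log n)$, and since $2\ceil{(2n-7)/4}\geq 2\floor{n/2}-2$ for all $n$, this comfortably exceeds $2\floor{n/2}$ once $n\geq 21$.

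Given these $\iota_1,\ldots,\iota_j$, I would apply the first preliminary lemma at the fixed point $x$ with the same $c$, $k_0$, and $j$. Because $M$ is not rationally $4$-periodic by hypothesis, its second alternative furnishes an involution $\iota\in T^s$ whose fixed-point component $N:=M^\iota_x$ satisfies $k_0<\cod(N)\leq(n-4)/2$, $\dim\ker\of{T|_N}\leq 1$, and the containment $\bigcap_{i=1}^j \of{M^{\iota_i}_x}^T\subseteq N$. Because $y$ is fixed by $T^s$ and lies in each $M^{\iota_i}_x$, we have $y\in\of{M^{\iota_i}_x}^T$ for every $i$, hence $y\in N$; and $x\in N$ by definition. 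Finally, since $\cod(N)$ is an integer strictly greater than $k_0=\floor{(n-4)/4}$, we get $\cod(N)\geq\floor{(n-4)/4}+1>(n-4)/4$, which together with the other conclusions yields the pair $(\iota,N)$ required by the corollary.
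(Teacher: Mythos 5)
Your proposal follows the paper's proof in all essential respects: set $c=4$, $t=2$, choose $k_0 \approx (n-4)/4$ and $j$ as in the first preliminary lemma, invoke the second lemma to produce $\iota_1,\ldots,\iota_j$ whose maximal fixed-point components all contain both $x$ and $y$, then feed these into the first lemma and use the containment $\bigcap_i (M^{\iota_i}_x)^T \subseteq M^\iota_x$ to conclude $y \in N := M^\iota_x$. Your use of $k_0 = \floor{(n-4)/4}$ instead of $(n-4)/4$ is a harmless variant, and the integrality argument at the end correctly recovers $\cod(N) > (n-4)/4$.

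The one place where your argument falls short of a proof is the verification of Inequality \eqref{eqn:Griesmer}. You phrase the estimate in asymptotic language ($o(1)$, $\Omega(\log n)$, ``superlinear'', ``comfortably exceeds''), which establishes the inequality only for $n$ sufficiently large, not for all $n \geq 21$. This matters because the margin is tight near the threshold: for $n=21$ (odd) one has $j=2$, and since $2\ceil{(2n-7)/4}$ equals $2\floor{n/2}-2$ exactly, the inequality $j-1+2\ceil{(2n-7)/4}+(\text{tail}) > 2\floor{n/2}$ reduces to needing the tail to contribute strictly more than $1$. That does hold, but it requires an actual numeric check rather than an $\Omega(\cdot)$ appeal. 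The paper closes this gap with an explicit chain: it first shows $s-j+1 \geq \log_2(n-3)$ so that $\ceil{(n-3)/2^{i+1}}=1$ for $i \geq s-j$, rewrites $R \geq \sum_{i=0}^{s-1}\ceil{(n-3)/2^{i+1}}$, then peels off five $1$-terms and bounds the geometric remainder to get $R > n+1 > 2\floor{n/2}$ using $s-5 > \log_2(n-3)$, which holds for all $n \geq 21$. You should replace the asymptotic hand-waving with a computation of this kind (or at minimum spot-check the boundary cases $n=21,22$) to make the proof complete.
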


\begin{proof}
Set $c = 4$, $k_0 = \frac{n-4}{4}$, and $j = \floor{\log_2(k_0)} + 1 - \ep = \floor{\log_2(n-4)} - 1 - \ep$, where $\ep$ is zero or one according to whether $n$ is even or odd. By the first lemma above, it suffices to prove the existence of independent involutions $\iota_1, \ldots, \iota_j \in T^s$ such that each $M^{\iota_i}$ has a component of codimension at most $k_0$ that contains both $x$ and $y$.

Set $t = 2$, and note that $\ceil{\frac{t+1}{2}} = 2$. By the second lemma, such a collection of involutions exists if Inequality \ref{eqn:Griesmer} holds.

To verify this inequality for all $n \geq 21$, first observe that \[s-j + 1\geq \ceil{\log_{4/3}(n-3)} - j + 1 \geq \log_2(n-3)\] and hence that $\ceil{\frac{n-3}{2^{i+1}}} = 1$ for all $i \geq s - j$. In particular, we can estimate the right-hand side, denoted by $R$, of Inequality \ref{eqn:Griesmer} as follows. First,
	\[R = j - 1 + \sum_{i=0}^{s-j}\ceil{2^{-i}\ceil{\frac{2(n-4)+2}{4}}}
	   \geq j-1 + \sum_{i=0}^{s-j}\ceil{\frac{n-3}{2^{i+1}}}
	   = \sum_{i=0}^{s-1} \ceil{\frac{n-3}{2^{i+1}}}.\]
Second, note that $s \geq 6$, hence we can write
	\[R \geq 5 + \sum_{i=0}^{s-6} \frac{n-3}{2^{i+1}}.\]
Finally, evaluating the geometric sum implies that $R > n + 1$ since $s - 5 > \log_2(n-3)$. This proves that Inequality \ref{eqn:Griesmer} holds.
\end{proof}

\bigskip\section{Proof of Theorem A}\label{sec:Euler}\bigskip

For an isometric action by a torus $T$ on a Riemannian manifold $M$, the fixed-point set $M^T$ is a union of closed, totally geodesic submanifolds of even codimension. We recall the following results that relate the topology of $M$ and $M^T$ (see, for example, \cite{Conner57} and \cite[p. 72]{Hirzebruch92}):
	\begin{itemize}
	\item (Conner's theorem) The even Betti numbers satisfy $\sum b_{2i}(M^T) \leq \sum b_{2i}(M)$ and likewise for the odd Betti numbers.
	\item The Euler characteristics satisfy $\chi(M) = \chi(M^T)$
	\item The signatures satisfy $\sigma(M) = \sigma(M^T)$.
	\end{itemize}

Note that, if $M$ is an even-dimensional, positively curved rational sphere, then $M^T$ is as well. Since a positive- and even-dimensional sphere trivially has Euler characteristic two and signature zero, the first three conclusions of Theorem \ref{thm:Euler} are an immediate consequence of the following:

\begin{theorem}\label{thm:EulerPLUS}
Let $M^n$ be a closed, simply connected, positively curved Riemannian manifold with $b_4(M) = 0$. If a torus $T$ acts isometrically and effectively on $M$ with $\dim(T) \geq \log_{4/3}(n-3)$, then $M^T = N^T$ for some totally geodesic, even-codimensional, positive-dimensional rational sphere $N \subseteq M$ to which the $T$--action restricts.
\end{theorem}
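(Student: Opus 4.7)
My plan is to prove Theorem \ref{thm:EulerPLUS} by induction on $n$, using Corollary \ref{cor:CoverTwo} as the main engine to descend to a lower-dimensional submanifold. The base case (dimensions $n < 21$ or where Corollary \ref{cor:CoverTwo} cannot be applied) would be handled via classification results such as those in \cite{Kennard2}, which under the hypotheses force $M$ to be a rational sphere, in which case $N = M$ works.

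For the inductive step, the first dichotomy is whether or not $M$ is rationally $4$-periodic. In the periodic case, the hypothesis $b_4(M) = 0$ forces the periodicity generator in $H^4(M;\Q)$ to vanish, so $M$ itself is a rational sphere and $N = M$ satisfies the conclusion trivially. If $M$ is not rationally $4$-periodic, I would pick two points $x, y \in M^T$ (the fixed set is nonempty by Berger's theorem applied to a circle subgroup of $T$) and apply Corollary \ref{cor:CoverTwo} to obtain an involution $\iota \in T$ and a component $N_0 \subseteq M^\iota$ with $x, y \in N_0$, $(n-4)/4 < \cod(N_0) \leq (n-4)/2$, and $\dim \ker(T|_{N_0}) \leq 1$. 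I would then check that $N_0$ inherits the theorem's hypotheses in dimension $n' = \dim(N_0)$: Wilking's connectedness lemma makes $N_0 \hookrightarrow M$ at least $4$-connected, so $N_0$ is simply connected with $b_4(N_0) = 0$; it is positively curved as a totally geodesic submanifold; and the effective torus $T' = T/\ker(T|_{N_0})$ satisfies $\rank(T') \geq s - 1 \geq \log_{4/3}(n' - 3)$, via the chain $(4/3)^{s-1} \geq (3/4)(n-3) \geq n' - 3$ coming from $\cod(N_0) > (n-4)/4$. The inductive hypothesis then produces a totally geodesic rational sphere $N \subseteq N_0$ of positive even dimension with $N_0^T = N^T$.

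What remains, and what I expect to be the main obstacle, is to show $M^T \subseteq N_0$, so that $M^T = N_0^T = N^T$. Any connected component of $M^T$ lies in $M^\iota$ and thus sits inside a single component of $M^\iota$; the components through $x$ and $y$ lie in $N_0$ by construction. To rule out a third fixed component $F \subseteq M^T$ disjoint from $N_0$, my plan is to pick $z \in F$, reapply Corollary \ref{cor:CoverTwo} to the pair $(x, z)$ to produce a second submanifold $N_0'$, and exploit the fact that $N_0 \cap N_0'$ is connected (by Wilking's connectedness lemma, since both have codimension at most $(n-4)/2$). Combining this with the inductive hypothesis applied to $N_0'$ and an Euler-characteristic bookkeeping on the components of $M^\iota$, I would hope either to reach a contradiction or to merge $N$ and the rational sphere inside $N_0'$ into a single $T$-invariant rational sphere absorbing $F$. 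The most delicate step will be closing this merger cleanly; one viable strengthening is to enrich the inductive hypothesis by tracking the number of components of $M^T$, so that only two components are available at each stage of the induction and the third-component case simply cannot arise.
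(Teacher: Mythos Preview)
Your overall plan matches the paper's proof: induction on $n$, with the small--dimension cases handled by Grove--Searle, the observation that $4$--periodic plus $b_4 = 0$ forces $M$ to be a rational sphere, and the inductive step driven by Corollary \ref{cor:CoverTwo} to pass to a submanifold $N_0$ inheriting all hypotheses in lower dimension. The rank arithmetic and the use of Wilking's connectedness lemma to transfer $\pi_1$ and $b_4$ to $N_0$ are exactly as in the paper.

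The gap is in your final paragraph. The paper does not close the argument by Euler-characteristic bookkeeping on components of $M^\iota$, nor by merging two rational spheres, nor by enriching the inductive statement; none of these routes is needed. Instead it splits on whether some component $M^T_x$ of $M^T$ is positive-dimensional. If so, then for \emph{any} $y \in M^T$ the induction applied to $N_0$ yields $\sum b_i(N_0^T) = 2$, while $M^T_x \subseteq N_0^T$ alone already contributes at least $2$; hence no second component can exist and $M^T = M^T_x$ is itself the required rational sphere. If instead $M^T$ consists of isolated points, one first uses Bredon \cite[Corollary IV.2.3]{Bredon72} to rule out a single fixed point; if there are three, apply Corollary \ref{cor:CoverTwo} to $(x,y)$ and to $(x,z)$ to obtain $N_0$ and $N_0'$, and the induction forces $N_0^T = \{x,y\}$ and $(N_0')^T = \{x,z\}$. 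Then $N_0 \cap N_0'$ is not merely connected but \emph{simply connected} by Wilking, yet its $T$--fixed set is $\{x\}$, contradicting Bredon again. You had the right intersection in hand; what was missing was the ``exactly one fixed point is impossible'' obstruction, which replaces all of the vaguer strategies you listed.
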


We deduce the final conclusion of Theorem \ref{thm:Euler} at the end of the section.

This result can be seen as a less localized version of the main theorem in \cite{Kennard2}. Under the assumptions of Theorem \ref{thm:EulerPLUS}, the results in \cite{Kennard2} imply that each component of $M^T$ is a component of the fixed-point set of some $N$ as in this theorem. The novelty here is that the entire fixed-point set $M^T$ is contained in this submanifold $N$.

The proof of Theorem \ref{thm:EulerPLUS} is by induction over the dimension in the style of Wilking \cite{Wilking03}. First, if $n \leq 3$, the result is trivial since $M$ is a homotopy sphere. Second, if $n = 4$, the result is vacuous since $b_4(M) = 1$ in this case. Third, for $5 \leq n \leq 20$, $s \geq \frac{n}{2}$, hence the result of Grove and Searle implies that $M$ is diffeomorphic to the sphere (see \cite{GroveSearle94}). In particular, $M$ is a rational sphere, and the theorem holds by taking $N = M$.

Finally, suppose that $n \geq 21$. As in the previous paragraph, if $M$ itself is a rational sphere, then the theorem immediately holds. We assume throughout that $M$ is not a rational sphere. The induction step has two parts. The first considers the case where some component $M^T_x$ has positive dimension.

\begin{lemma}
If some component $M^T_x$ of $M^T$ has positive dimension, then $M^T = M^T_x$ and is a rational sphere. In particular, the theorem holds with $N = M^T$.
\end{lemma}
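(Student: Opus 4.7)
The plan is to apply Corollary \ref{cor:CoverTwo} to enclose both $x$ and any other prospective $T$--fixed point $y$ in a totally geodesic submanifold $N \subsetneq M$, and then to invoke the inductive hypothesis on $N$ to conclude that $N^T$ is a single, connected, rational sphere. This will simultaneously force $y \in M^T_x$ (so $M^T = M^T_x$) and, taking $y = x$, identify $M^T_x$ with $N^T$ as a rational sphere.

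First I would observe that $M$ cannot be rationally $4$--periodic. Otherwise the periodicity class would lie in $H^4(M;\Q) = 0$, hence vanish, and the resulting vanishing of $H^i(M;\Q)$ in a large range---together with simple connectedness, Poincar\'e duality, and the bound $b_2(M) \leq b_4(M)$ recalled in the introduction---would force $M$ to be a rational sphere, contradicting the standing assumption. Thus Corollary \ref{cor:CoverTwo} is available. For any $y \in M^T$, applying it to the pair $(x,y)$ yields an involution $\iota \in T$ and a component $N \subseteq M^\iota$ containing both $x$ and $y$, with $\frac{n-4}{4} < \cod(N) \leq \frac{n-4}{2}$ and $\dim \ker(T|_N) \leq 1$; the induced effective action of $T' = T/\ker(T|_N)$ on $N$ satisfies $\dim T' \geq \dim T - 1$.

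Next I would verify that $N$ meets the hypotheses of Theorem \ref{thm:EulerPLUS} in its own dimension: closedness and positive curvature are inherited; simple connectedness and, in the generic range $\cod(N) < (n-4)/2$, the vanishing $b_4(N) = 0$ both follow from Wilking's connectedness lemma, since $N \hookrightarrow M$ is $(n - 2\cod(N))$--connected with $n - 2\cod(N) \geq 4$; and the rank inequality $\dim T' \geq \log_{4/3}(\dim N - 3)$ reduces to an elementary computation using $\cod(N) > (n-4)/4$ together with $\dim T \geq \log_{4/3}(n-3)$. Because $M^T_x$ is connected, lies in $M^T \subseteq M^\iota$, and meets $N$ at $x$, we have $M^T_x \subseteq N$; hence $(N^T)_x \supseteq M^T_x$ is positive-dimensional. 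By the inductive hypothesis---namely the present lemma in dimension $\dim N < n$---the set $N^T$ is a single connected component, equal to a rational sphere. Then $N^T \subseteq M^T$ is connected and contains both $x$ and $y$, so $y \in M^T_x$; since $y$ was arbitrary, $M^T = M^T_x$. Applying the same argument with $y = x$ gives $M^T_x \subseteq N^T \subseteq M^T = M^T_x$, whence $M^T = N^T$ is a rational sphere.

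I expect the principal obstacle to be verifying $b_4(N) = 0$ in the boundary case $\cod(N) = (n-4)/2$, where Wilking's connectedness lemma yields only an injection $H^4(M;\Q) \hookrightarrow H^4(N;\Q)$ rather than an isomorphism. This should be handled by invoking Wilking's cohomological periodicity for the inclusion $N \hookrightarrow M$: since $M$ is not rationally $4$--periodic, the associated periodicity class $e \in H^{\cod(N)}(M;\Q)$ must be nonzero, and a short argument transporting this information down to $N$ via restriction and Poincar\'e duality on $N$ should rule out $b_4(N) > 0$.
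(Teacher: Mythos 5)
Your proposal follows essentially the same route as the paper: apply Corollary~\ref{cor:CoverTwo} to the pair $(x,y)$ to enclose both points in a lower-dimensional submanifold $N$, verify the hypotheses for the induction, and conclude. The paper phrases the inductive step via Theorem~\ref{thm:EulerPLUS} (giving $\sum b_i(N^T) = 2$ and then comparing with $\sum b_i(M^T_x \cup M^T_y)$) rather than invoking the lemma itself, but the content is the same. One small correction: the worry in your final paragraph is unfounded, since Wilking's connectedness lemma gives $(n - 2\cod(N) + 1)$-connectedness (not $(n-2\cod(N))$-connectedness), so even in the boundary case $\cod(N) = \tfrac{n-4}{2}$ the inclusion is $5$-connected, which forces $H^4(N;\Q) \cong H^4(M;\Q) = 0$ outright with no need for the periodicity-class argument.
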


\begin{proof}
Let $y \in M^T$. By Corollary \ref{cor:CoverTwo}, there exists an $\iota \in T$ and a component $N\subseteq M^\iota$ such that $\frac{n-4}{4} < \cod(N) \leq \frac{n-4}{2}$, $\dim\ker\of{T|_N} \leq 1$, and $x,y\in N$. By Wilking's connectedness lemma, $N$ is simply connected and $b_4(N) = 0$. Moreover, since $\cod(N) \geq \frac{n-3}{4}$, 
	\[\dim\of{T/\ker\of{T|_N}} \geq \dim(T) - 1 \geq \log_{4/3}(n-3) - 1 \geq \log_{4/3}\of{\dim N - 3}.\]
Since $T/\ker\of{T|_N}$ is a torus that acts effectively on $N$, a closed, simply connected, positively curved Riemannian manifold with $b_4(N) = 0$, the induction hypothesis applies to $N$. Since $M^T_x$ and $M^T_y$ are components of $N^T = N\cap M^T$, we therefore have
	\[\sum b_i(M^T_x \cup M^T_y) \leq \sum b_i(N^T) = 2.\]
Observe that $\sum b_i(M^T_x) \geq 2$ since $M^T_x$ has positive dimension. In particular, if $y$ lies in a component of $M^T$ different from $M^T_x$, then the the left-hand side is at least
	\[\sum b_i(M^T_x) + \sum b_i(M^T_y) \geq 2 + 1,\]
a contradiction. This shows that $M^T = M^T_x$. Moreover, the above argument works with $x = y$, hence $\sum b_i(M^T_x) = 2$, which implies that $M^T_x$ is a rational sphere.
\end{proof}

The other possibility is that $T$ has only isolated fixed points.

\begin{lemma}
If $\dim(M^T) = 0$, then there are exactly two isolated fixed points. Moreover, there exists a totally geodesic, positive-dimensional, and even-dimensional rational sphere $N \subseteq M$ on which $T$ acts such that $M^T = N^T$.
\end{lemma}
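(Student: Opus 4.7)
The plan is first to pick two fixed points, produce a candidate rational sphere $N$ through them by combining Corollary~\ref{cor:CoverTwo} with the induction hypothesis, and then to rule out any third fixed point using a second application of Corollary~\ref{cor:CoverTwo} together with an intersection argument. Before starting, note that the degenerate cases $|M^T| \leq 1$ are incompatible with the conclusion: any positive-dimensional even-dimensional rational sphere $N$ has $\chi(N^T) = \chi(N) = 2$, so the identity $M^T = N^T$ already forces $|M^T| = 2$. Consequently these cases can be dispatched by contradiction once the rest of the argument is in place, and I henceforth fix distinct fixed points $x, y \in M^T$.

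By Corollary~\ref{cor:CoverTwo}, choose an involution $\iota \in T$ and a component $N_1 \subseteq M^\iota$ containing $x$ and $y$ with $\cod(N_1) \in ((n-4)/4, (n-4)/2]$ and $\dim\ker(T|_{N_1}) \leq 1$. Exactly as in the proof of the previous lemma, Wilking's connectedness yields that $N_1$ is simply connected with $b_4(N_1) = 0$, and that the effective torus $T/\ker(T|_{N_1})$ has rank at least $\log_{4/3}(\dim N_1 - 3)$. Applying Theorem~\ref{thm:EulerPLUS} inductively to $N_1$ produces a totally geodesic, positive-dimensional, even-codimensional rational sphere $N \subseteq N_1$ with $N_1^T = N^T$. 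Since $\chi(N) = 2$ and $N^T \subseteq M^T$ is discrete, $|N^T| = 2$, and $\{x, y\} \subseteq N_1^T = N^T$ forces $N^T = \{x, y\}$.

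To prove $M^T = N^T$, suppose for contradiction that $z \in M^T \setminus \{x, y\}$. Applying Corollary~\ref{cor:CoverTwo} to the pair $(x, z)$ produces an involution $\tau \in T$ and a component $N_2 \subseteq M^\tau$ containing $x$ and $z$, with the analogous bounds. Necessarily $N_2 \neq N_1$, else $z \in N_1^T = \{x, y\}$. Let $P$ denote the component of $N_1 \cap N_2$ containing $x$; it is a totally geodesic, $T$-invariant submanifold of $M$, and since any element of $N_2^T$ belongs to $\{x, z\}$ (forcing $y \notin N_2$) and symmetrically $z \notin N_1$, its $T$-fixed set is $P^T = N_1^T \cap N_2^T = \{x\}$. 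I would then invoke Theorem~\ref{thm:EulerPLUS} inductively once more, on $P$, obtaining a positive-dimensional even-codimensional rational sphere $N'' \subseteq P$ with $P^T = N''^T$; this would force $|N''^T| = \chi(N'') = 2$, contradicting $P^T = \{x\}$.

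The main obstacle is verifying the inductive hypotheses for $P$: simple connectivity, $b_4(P) = 0$, and sufficient rank of $T/\ker(T|_P)$. When $\cod(N_1)$ and $\cod(N_2)$ are both close to $(n-4)/2$, Wilking's connectedness applied to the single step $P \hookrightarrow N_i$ becomes too weak, and the naive bound on $\dim\ker(T|_P)$ obtained from $\dim\ker(T|_{N_i}) \leq 1$ may not suffice. I would address this by exploiting that $P = M^{\langle\iota,\tau\rangle}_x$ is a fixed-point component of the $(\Z/2)^2$-subgroup $\langle\iota,\tau\rangle \subseteq T$, allowing one to chain Wilking's connectedness along $P \subseteq N_i \subseteq M$ and to track the kernel carefully; alternatively, one may choose $\iota$ and $\tau$ in Corollary~\ref{cor:CoverTwo} more judiciously so as to keep $\dim\ker(T|_P)$ small along the way.
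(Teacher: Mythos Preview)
Your strategy is essentially the paper's: use Corollary~\ref{cor:CoverTwo} on the pairs $(x,y)$ and $(x,z)$, apply the induction hypothesis to the resulting submanifolds $N_1$ and $N_2$ to pin down their $T$--fixed sets as $\{x,y\}$ and $\{x,z\}$, and obtain a contradiction from $P^T = \{x\}$ where $P$ is the component of $N_1 \cap N_2$ through $x$. The difference lies entirely in how that last contradiction is extracted, and this is exactly where your acknowledged obstacle dissolves.

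The paper does \emph{not} apply the induction hypothesis to $P$. Since $\cod(N_1), \cod(N_2) \leq \tfrac{n-4}{2}$, Frankel gives $N_1 \cap N_2 \neq \emptyset$ and Wilking's connectedness lemma makes $N_1 \cap N_2 \hookrightarrow N_2$ at least $(n - \cod N_1 - \cod N_2) \geq 4$ connected; in particular $N_1 \cap N_2$ is connected and simply connected. One then invokes Bredon's result that a torus action on a simply connected closed manifold cannot have exactly one fixed point. That is the whole contradiction: no need to verify $b_4(P) = 0$, no need to bound $\dim\ker(T|_P)$, and no need for your suggested workarounds. The same Bredon fact, together with Berger's theorem, is what the paper uses up front to dispose of the cases $|M^T| \leq 1$; your attempt to deduce these from the conclusion is circular, since your construction of $N$ already presupposes two distinct fixed points $x,y$.
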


\begin{proof}
First, a theorem of Berger implies there is at least one fixed point (see \cite{Berger66}), and there cannot be exactly one fixed point (see Bredon \cite[Corollary IV.2.3, p. 178]{Bredon72}).

Suppose for a moment that $M^T$ has exactly two isolated fixed points. By Corollary \ref{cor:CoverTwo}, $M^T \subseteq P$ for some totally geodesic, even-dimensional, closed submanifold $P$ with $b_4(P) = 0$ and an isometric torus action of rank at least $\log_{4/3}\of{\dim P - 3}$. By the induction hypothesis applied to $P$, $P^T = N^T$ for some totally geodesic, even-dimensional, positive-dimensional rational sphere $N \subseteq P$. Since $M^T \subseteq N^T \subseteq M^T$, this proves the lemma in this case.

Finally, suppose that $M^T$ has at least three (distinct) isolated fixed points, $x$, $y$, and $z$. Two applications of Corollary \ref{cor:CoverTwo} imply the existence of involutions $\sigma,\tau\in T^s$ such that
	\begin{itemize}
	\item $M^\sigma_x$ has $\dim\ker\of{T|_{M^\sigma_x}} \leq 1$, $\frac{n-4}{4} < \cod\of{M^\sigma_x} \leq \frac{n-4}{2}$, and $y \in M^\sigma_x$, and
	\item $M^\tau_x$ has $\dim\ker\of{T|_{M^\tau_x}} \leq 1$, $\frac{n-4}{4} < \cod\of{M^\tau_x} \leq \frac{n-4}{2}$, and $z \in M^\sigma_x$.
	\end{itemize}
As before, the induction hypothesis applies to both $M^\sigma_x$ and $M^\tau_x$. Since the torus action on $M$ has only isolated fixed points, the same is true of the torus action on $M^\sigma_x$ and $M^\tau_x$. Hence $\of{M^\sigma_x}^T = \{x,y\}$ and $\of{M^\tau_x}^T = \{x,z\}$. This further implies
	\[\of{M^\sigma_x \cap M^\tau_x}^T 
	= \of{M^\sigma_x}^T \cap \of{M^\tau_x}^T = \{x\}.\]
On the other hand, Frankel's theorem implies that $M^\sigma_x$ and $M^\tau_x$ intersect, and Wilking's connectedness lemma implies that $M^\sigma_x \cap M^\tau_x$ is connected and simply connected. In particular, as with $M$, this intersection cannot have exactly one fixed point. This concludes the proof.
\end{proof}

This concludes the proof of Theorem \ref{thm:EulerPLUS} and hence of the first three conclusions of Theorem \ref{thm:Euler}. We now prove the fourth conclusion.

\begin{proof}[Proof of Theorem \ref{thm:Euler}, Conclusion (d)]
Let $g \in T$. The Lefschetz number of the map $g:M \to M$ is equal to $\chi(M)$ since $g$ is homotopic to the identity. Hence $\chi(M^g) = \chi(M) = 2$, so $M^g$ is nonempty.

Since $g:M \to M$ is orientation-preserving, each component of $M^g$ is a closed, even-dimensional, totally geodesic submanifold of $M$ to which the $T$--action restricts. Hence each component of $M^g$ contains an fixed point of $T$ by Berger's theorem. Since $M^T$ is a rational sphere, either $M^g$ is connected or it has two components and the $T$--actions on them have exactly one fixed point each. The latter case can only occur if each component of $M^g$ is non-orientable (see again Bredon \cite[Corollary IV.2.3, p. 178]{Bredon72}), which in turn can only occur if $g^2 = \id$.
\end{proof}

\smallskip\section{Proof of Corollary D}\label{sec:HomeoDiffeo}\bigskip

Corollary \ref{cor:HomeoDiffeo} is an easy consequence of Theorem \ref{thm:Euler} together with a few general classification results. We only use Theorem \ref{thm:Euler} to deduce that $\chi(M) = 2$.

First, it is immediate that, if $M$ has vanishing odd-dimensional rational homology, then $M$ is a rational homology sphere. Since $M$ is simply connected, this is equivalent to $M$ having the rational homotopy type of a sphere. 

Second, we refer to Powell \cite{Powell97} for a definition of $p$--elliptic for a prime $p$. In particular, it follows that $M$ is rationally elliptic and hence a rational sphere. In \cite[Theorem 1]{Powell97}, Powell classified $p$--elliptic rational spheres for $p \geq \dim(M)$, and it follows immediately that $M$ is a mod $p$ homology sphere.

Third, if $M$ has vanishing odd-dimensional integral homology, then its homology is torsion-free. Since $M$ is a rational homology sphere, $M$ is, in fact, an integral homology sphere. Since $M$ is simply connected, it follows from the resolution of the Poincar\'e conjecture that $M$ is homeomorphic to a sphere.

Fourth, since biquotients are rationally elliptic, $M$ is again a rational sphere. Totaro \cite[Theorem 6.1]{Totaro02} and Kapovitch--Ziller \cite[Theorem A]{KapovitchZiller04} classified such biquotients, and their results immediately imply that $M$ is diffeomorphic to $\s^{2n}$.

Finally, if $M$ admits a cohomogeneity one structure, it is rationally elliptic by Grove--Halperin \cite{GroveHalperin87}, hence $M$ is a rational sphere. Since $M$ has no two--torsion in its homology, it follows that $M$ is a mod $2$ homology sphere. The result now follows from Asoh's classification up to equivariant diffeomorphism of mod $2$ homology spheres that admit a cohomogeneity one action (see \cite[Main Theorem]{Asoh81}).

%%%% Bibliography %%%%

%%%% Contact information %%%%

\begin{center}
\noindent
\begin{minipage}{\linewidth}
\small \noindent \textsc
{Manuel Amann} \\
\textsc{Fakult\"at f\"ur Mathematik}\\
\textsc{Institut f\"ur Algebra und Geometrie}\\
\textsc{Karlsruher Institut f\"ur Technologie}\\
\textsc{Kaiserstra\ss e 89--93}\\
\textsc{76133 Karlsruhe}\\
\textsc{Germany}\\
[1ex]
\textsf{manuel.amann@kit.edu}\\
\url{http://topology.math.kit.edu/21_54.php}
\end{minipage}
\end{center}

\vspace{10mm}

\begin{center}
\noindent
\begin{minipage}{\linewidth}
\small \noindent \textsc
{Lee Kennard} \\
\textsc{Department of Mathematics}\\
\textsc{South Hall}\\
\textsc{University of California}\\
\textsc{Santa Barbara, CA 93106-3080}\\
\textsc{USA}\\
[1ex]
\textsf{kennard@math.ucsb.edu}\\
\url{http://math.ucsb.edu/~kennard}
\end{minipage}
\end{center}

\newpage


\begin{thebibliography}{10}

\bibitem{AmannKennard1}
M.~Amann and L.~Kennard{\noopsort{AmannKennard1}}.
\newblock {Topological properties of positively curved manifolds with
  symmetry}.
\newblock {\em (preprint)}, arXiv:1302.6709.

\bibitem{Asoh81}
T.~Asoh.
\newblock {Compact transformation groups on $\Z_2$--cohomology spheres with
  orbit of codimension $1$}.
\newblock {\em Hiroshima Math. J.}, 11:571--616, 1981.

\bibitem{Berger66}
M.~Berger.
\newblock {Trois remarques sur les vari\'et\'es riemanniennes \`a courbure
  positive}.
\newblock {\em C. R. Math. Acad. Sci. Paris}, 263:A76--A78, 1966.

\bibitem{Bredon72}
G.E. Bredon.
\newblock {\em {Introduction to compact transformation groups}}.
\newblock Academic Press, 1972.

\bibitem{Conner57}
P.E. Conner.
\newblock {On the action of the circle group}.
\newblock {\em Michigan Math. J.}, 4:241--247, 1957.

\bibitem{Dearricott11}
O.~Dearricott.
\newblock {A 7-manifold with positive curvature}.
\newblock {\em Duke Math. J.}, 158(2):307--346, 2011.

\bibitem{Dessai05}
A.~Dessai.
\newblock Characteristic numbers of positively curved spin-manifolds with
  symmetry.
\newblock {\em Proc. Amer. Math. Soc.}, 133(12):3657--3661, 2005.

\bibitem{Dessai07}
A.~Dessai.
\newblock Obstructions to positive curvature and symmetry.
\newblock {\em Adv. Math.}, 210(2):560--577, 2007.

\bibitem{Eschenburg84}
J.H. Eschenburg.
\newblock {\em {Freie isometrische Aktionen auf kompakten Lie-Gruppen mit
  positiv gekr\"ummten Orbitr\"aumen}}, volume~32 of {\em Schriftenreihe Math.
  Inst. Univ. M\"unster, 2}.
\newblock Universit\"at M\"unster Mathematisches Institut, 1984.

\bibitem{Grove09}
K.~Grove.
\newblock {Developments around positive sectional curvature}.
\newblock {\em Surveys in Differential Geometry, Vol. XIII. Geometry, analysis,
  and algebraic geometry: Forty years of the Journal of Differential Geometry},
  pages 117--133, 2009.

\bibitem{GroveHalperin82}
K.~Grove and S.~Halperin.
\newblock {Contributions of rational homotopy theory to global problems in
  geometry}.
\newblock {\em Publ. Math. IHES}, 56(1):171--177, 1982.

\bibitem{GroveHalperin87}
K.~Grove and S.~Halperin.
\newblock {Dupin hypersurfaces, group actions, and the double mapping
  cylinder}.
\newblock {\em J. Differential Geom.}, 26:429--459, 1987.

\bibitem{GroveSearle94}
K.~Grove and C.~Searle.
\newblock {Positively curved manifolds with maximal symmetry rank}.
\newblock {\em J. Pure Appl. Algebra}, 91(1):137--142, 1994.

\bibitem{GroveVerdianiZiller11}
K.~Grove, L.~Verdiani, and W.~Ziller.
\newblock {An exotic $T_1\s^4$ with positive curvature}.
\newblock {\em Geom. Funct. Anal.}, 21(3):499--524, 2011.

\bibitem{Hirzebruch92}
F.~Hirzebruch, T.~Berger, R.~Jung, and P.S. Landweber.
\newblock {\em Manifolds and modular forms}, volume~20.
\newblock Springer, 1992.

\bibitem{KapovitchZiller04}
V.~Kapovitch and W.~Ziller.
\newblock {Biquotients with singly generated rational cohomology}.
\newblock {\em Geom. Dedicata}, 104:149--160, 2004.

\bibitem{Kennard2}
L.~Kennard{\noopsort{Kennard2}}.
\newblock {Positively curved Riemannian metrics with logarithmic symmetry rank
  bounds}.
\newblock {\em Comment. Math. Helv.}, (to appear), arXiv:1209.4627v1 [math.DG].

\bibitem{Kennard3}
L.~Kennard{\noopsort{Kennard3}}.
\newblock {On the Chern problem with symmetry}.
\newblock {\em Preprint}, arXiv:1310.7251 [math.DG].

\bibitem{Powell97}
G.M.L. Powell.
\newblock {Elliptic spaces with the rational homotopy type of spheres}.
\newblock {\em Bull. Belg. Math. Soc. Simon Stevin}, 4(2):251--263, 1997.

\bibitem{Totaro02}
B.~Totaro.
\newblock {Cheeger manifolds and the classification of biquotients}.
\newblock {\em J. Differential Geom.}, 61(3):397--451, 2002.

\bibitem{Wallach72}
N.R. Wallach.
\newblock {Compact homogeneous Riemannian manifolds with strictly positive
  curvature}.
\newblock {\em Ann. of Math.}, 96(2):277--295, 1972.

\bibitem{Weisskopf}
N.~Weisskopf.
\newblock {Positive curvature and the elliptic genus}.
\newblock {\em Preprint}, arXiv:1305.5175v1.

\bibitem{Wilking03}
B.~Wilking.
\newblock {Torus actions on manifolds of positive sectional curvature}.
\newblock {\em Acta Math.}, 191(2):259--297, 2003.

\bibitem{Wilking07}
B.~Wilking.
\newblock {Nonnegatively and Positively Curved Manifolds}.
\newblock {\em Surveys in Differential Geometry, Vol. XI. Metric and Comparison
  Geometry}, pages 25--62, 2007.

\bibitem{Ziller07}
W.~Ziller\noopsort{Ziller2007}.
\newblock {Examples of Riemannian manifolds with non-negative sectional
  curvature}.
\newblock {\em Surveys in Differential Geometry, Vol. XI. Metric and Comparison
  Geometry}, pages 63--102, 2007.

\bibitem{Ziller??}
W.~Ziller\noopsort{Ziller2014}.
\newblock {Riemannian manifolds with positive sectional curvature}.
\newblock {\em Lecture Notes in Mathematics, 2110}.

\end{thebibliography}
\end{document}